\documentclass[11pt]{article}
\usepackage{setting}

\title{Abundance of Unique Subhypergraphs }
\author{Xichao Shu\thanks{Institute of Mathematics, Leipzig University, Leipzig, Germany. Supported by the Alexander von Humboldt Foundation in the framework of the Alexander von Humboldt Professorship of Daniel Kr{\'a}l' endowed by the Federal Ministry of Education and Research. Email: \texttt{xichao.shu@uni-leipzig.de}. }\and
Zhuo Wu\thanks{Departament de Matemàtiques, Universitat Politècnica de Catalunya (UPC),
Carrer de Pau Gargallo 14, 08028 Barcelona, Spain. Z. Wu acknowledges the bilateral AEI+DFG research project PCI2024-155080-2: SRC-ExCo – Structure, Randomness and Computational Methods in Extremal Combinatorics, and the PID2023-147202NB-I00 (COCOA: COntemporary COmbinatorics and its Applications), all funded by MICIU/AEI/10.13039/501100011033. Email: \texttt{zhuo.wu@upc.edu}.}\and
Yisai Xue\thanks{School of Mathematics and Statistics, Ningbo University, Ningbo, China. Supported by the National Natural Science Foundation of China (No. 12501486). Email: xueyisai@nbu.edu.cn}}

\date{}

\begin{document}
\maketitle

\begin{abstract}
Given $k$-uniform hypergraphs $G$ and $H$, we say that $G$ is a unique subhypergraph of $H$ if $H$ contains exactly one subhypergraph isomorphic to $G$.
For an $n$-vertex $k$-graph $H$, let $f_k(H)$ be the number of non-isomorphic unique subhypergraphs of $H$, normalized by $2^{\binom n k}/n!$, and let $f_k(n)$ be the maximum of $f_k(H)$ over all $n$-vertex $k$-graphs $H$.
In the graph case $k=2$, Erd\H{o}s asked whether there exists a constant $\delta>0$ such that $f_2(n)>\delta$ for all $n$, offering \$100 for a proof and \$25 for a disproof.

Recently, Brada\v{c} and Christoph answered this question in the negative,, proving that $f_2(n)$ tends to $0$, or equivalently that no $n$-vertex graph contains a positive proportion of all $n$-vertex graphs as unique subgraphs. In this paper we show that the situation is fundamentally different for $k$-uniform hypergraphs with $k\ge3$.
In particular, for every fixed integer $k\ge 3$, we prove that
$\liminf_{n\to\infty} f_k(n) \ge 2/9$.

\end{abstract}

\section{Introduction}

A common feature of many extremal problems is that only the existence of a
prescribed subgraph is relevant, while the number of its copies plays no role. The problem studied in this paper is of a different nature.  We
are interested in graphs that appear uniquely in a host graph, meaning that the
host contains exactly one subgraph isomorphic to the given graph.

Given graphs $G$ and $H$, we call $G$ a unique subgraph of $H$, and write $G\subseteq^*H$, if $H$ contains exactly one subgraph isomorphic to $G$.  For an $n$-vertex graph $H$, define $f_2(H)$ to be the number of non-isomorphic graphs $G$ on at most $n$ vertices satisfying $G\subseteq^*H$, normalized by $2^{\binom n2}/n!$.  Let $f_2(n)$ be the maximum of $f_2(H)$ over all graphs $H$ on $n$ vertices.  The denominator is the right scale, since the number of non-isomorphic graphs on $n$ vertices is $(1+o(1))2^{\binom n2}/n!$, by the classical enumeration theory of P\'olya and Wright \cite{Polya1937,Wright1971,Wright1972}.

The study of this parameter goes back to Entringer and Erd\H{o}s \cite{EntringerErdos1972}, who proved the first general lower bound, namely $f_2(n)\ge e^{-cn^{3/2}}$ for some constant $c>0$ and all sufficiently large $n$. Later, Harary and Schwenk \cite{HararySchwenk1973} improved this to $f_2(n)\ge e^{-cn\log n}$, and Brouwer \cite{Brouwer1975} further improved it to $f_2(n)\ge e^{-cn}$.  However, these estimates still fall far short of a positive constant. Erd\H{o}s asked whether such a constant lower bound is possible: is there a $\delta>0$ such that $f_2(n)>\delta$ for every $n$?  He offered \$100 for a proof and \$25 for a disproof \cite{Erdos1976}. The larger prize for a proof reflects the strength of the positive statement: it would say that, despite the rigidity imposed by uniqueness, one can still realize a positive proportion of all isomorphism types uniquely inside a single host. Recently, Brada\v{c} and Christoph gave a negative answer to this question.

\begin{theorem}[Brada\v{c} and Christoph \cite{BradacChristoph2025}]\label{thm:BC}
 \[\lim_{n\to\infty} f_2(n)=0.\]
\end{theorem}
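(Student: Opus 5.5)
The plan is to show that for every $\varepsilon>0$ and all large $n$, any $n$-vertex host $H$ has at most $\varepsilon\,2^{\binom n2}/n!$ non-isomorphic unique subgraphs. By the Pólya--Wright enumeration, graphs on fewer than $n-\omega(1)$ vertices contribute $o(1)$ after normalization: graphs on $m$ vertices number about $2^{\binom m2}/m!$, which is negligible once $n-m\to\infty$. The same estimate shows that restricting to asymmetric graphs costs only $o(1)$. So we may assume $G$ has exactly $n-t$ vertices with $t$ bounded. We may also assume $G$ is a spanning-type subgraph of $H$ with $e(G)$ close to $e(H)$, since the number of graphs with a fixed vertex set and $e$ edges is a tiny fraction of $2^{\binom n2}$ unless $e\approx\binom n2/2$.

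\textbf{Counting step.} Fix $H$ and, for simplicity, $t=0$ (the bounded-$t$ case is similar, with an extra factor $\binom nt$ absorbed by a stronger bound). A unique subgraph $G\subseteq^* H$ on $V(H)$ corresponds, up to isomorphism, to a subset $S=E(H)\setminus E(G)$ of removed edges. Uniqueness means no other subset $S'$ yields a graph isomorphic to $H-S$. Since each isomorphism type is counted once, we need the number of \emph{unique} $S$ to be at most $\varepsilon 2^{\binom n2}/n!$. Two cases arise.
\begin{itemize}
\item If $e(H)\le \binom n2-\Omega(n\log n)$, then the total number of subsets $S$, namely $2^{e(H)}$, is already at most $\varepsilon 2^{\binom n2}/n!$.
\item Otherwise $H$ misses only $O(n\log n)$ edges, so $H$ is nearly complete. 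Then for typical $S$, with $|S|\approx e(H)/2$, we use a local switching: pick two vertices $u,v$ and form $S'$ by swapping the roles of $u$ and $v$, i.e.\ applying the transposition $(uv)$ to $S$.
\end{itemize}
In the second case, $H-S'\cong H-S$ provided the transposition maps $E(H)\setminus S$ into $E(H)$, which holds whenever $u$ and $v$ have identical neighbourhoods in the complement of $H$ outside the edges of $S$ involved. Since the complement $\overline H$ has only $O(n\log n)$ edges, most vertices have complement degree $O(\log n)$. We therefore need a pair $u,v$ for which the transposition fixes $E(\overline H)$ but changes $S$.

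\textbf{Main obstacle.} Exact transposition invariance of $\overline H$ is too strong, so this step is where I expect the real difficulty. Instead, I would use a second-moment / switching argument. For a uniformly random $S$, consider the event that there exist vertices $u,v$ with $N_{\overline H}(u)\triangle N_{\overline H}(v)\subseteq\{u,v\}$, or more generally an automorphism-like map fixing $E(\overline H)$ and lying within $S$. Failing this, I would exploit that $S$ is random and bounded-difference, so the map $S\mapsto H-S$ has fibres of size about $|\mathrm{Aut}|$-type.

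The cleaner route is to observe that uniqueness forces $\mathrm{Aut}(H-S)$ to be trivial and $H-S$ to have no second embedding into $H$. Any $G$ with $e(G)=e(H)-s$ embeds into $H$ in at least (number of embeddings)/$|\mathrm{Aut}(H)|$ ways. The expected number of copies of a random $G$ in $H$ is $n!\,2^{-\binom n2}\cdot 2^{e(H)}\cdot(\ldots)$, which must be close to $1$ on average. A concentration argument, showing that the copy count of $G$ in $H$ is a Poisson-like variable with mean $\lambda$, then gives that the proportion with exactly one copy is at most $\lambda e^{-\lambda}$.

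To finish, I would have to show this variable is asymptotically Poisson, or at least has a spread-out distribution, uniformly over $H$. This would use the randomness of $G$ and Janson-type inequalities on the overlaps of pairs of embeddings. Controlling the correlations between pairs of embeddings that share most of their structure is the step I expect to be hardest.
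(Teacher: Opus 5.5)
The paper does not prove this theorem. It quotes it from Brada\v{c} and Christoph and reproduces only their reduction (Lemma~\ref{lem:labelled-reduction}, stated there for $k\ge3$). For $k=2$ that reduction gives $f_2(H)=\mathbb P[X=1]+o(1)$. Here $F=K_n\setminus H$ has $m$ edges, $R\sim\mathcal G^{(2)}(n,1/2)$, $X=|\{\pi\in S_n:\pi(F)\subseteq R\}|$, and $\mathbb E X=n!\,2^{-m}$. Graphs on fewer than $n$ vertices contribute only $o(1)$, so your bounded-$t$ discussion is unnecessary. Your easy case is correct once the cutoff is the sharp one, $m\ge\log_2 n!+\log_2(1/\varepsilon)$, rather than an unspecified $\Omega(n\log n)$. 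But the complementary case, $m\le\log_2 n!+O(1)$, is the entire theorem, and your proposal gives no argument for it. A sanity check shows that something essential is missing. Nothing in your outline uses $k=2$, yet Theorem~\ref{thm:main} shows the conclusion is false for $k\ge3$ in exactly this regime, namely for hosts $K_n^{(k)}\setminus F$ with $e(F)=\log_2 n!+O(1)$.

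Both of your tools fail in this regime.
\begin{itemize}
\item \emph{Transposition switching.} A transposition $(uv)$ gives a second copy only if all edges of $(uv)(F)\setminus F$ lie in $R$, and there can be up to $d_F(u)+d_F(v)$ of them. For a nearly regular $F$ of degree about $2\log_2 n$ with bounded codegrees this has probability about $n^{-4}$, so over all $\binom n2$ pairs the expected number of such copies is $O(n^{-2})$. In expectation, the extra copies come from permutations moving almost all vertices. No local switching finds them, let alone one fixing $\overline H$ exactly, which would require twins.
\item \emph{Poisson finish.} This points the wrong way. If $X$ were approximately Poisson with mean $\lambda$, then $\mathbb P[X=1]\approx\lambda e^{-\lambda}$, which equals $1/e$ at $\lambda=1$. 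That is the Section~5 heuristic for $k\ge3$, and it would give $f_2(n)\not\to0$.
\end{itemize}

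What you actually need is that, for graphs, $X$ is never close to a Poisson variable with bounded mean. Since $\mathbb P[X=1]=\mathbb E X\cdot\mathbb P[X=1\mid F\subseteq R]$, you must show $\mathbb P[X=1\mid F\subseteq R]=o(1/\mathbb E X)$. That is, in this planted model a second copy must exist with probability $1-o(1/\mathbb E X)$, uniformly over all $F$ with $m\le\log_2 n!+O(1)$.

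The feature special to $k=2$ that makes this plausible is the overlap scale. For uniform $\tau$ one has $\mathbb E|F\cap\tau(F)|=m^2/\binom n2\approx2\log_2^2 n$ when $m=\log_2 n!+O(1)$. By Jensen, $\frac1{n!}\sum_{\tau\ne\mathrm{id}}2^{|F\cap\tau(F)|}\ge n^{(2-o(1))\log_2 n}$, the opposite of Proposition~\ref{prop:overlap}. So the planted model contains superpolynomially many extra copies in expectation. The missing core of the proof is to convert this expectation into a with-high-probability statement, uniformly in $F$, and to handle the range $\mathbb E X\to\infty$, where $\mathbb P[X=1]\to0$ is not automatic either.
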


Theorem~\ref{thm:BC} gives a definitive answer to Erd\H{o}s's question in the graph case.  However, the problem itself is not tied to any feature special to edges of size two.  It only asks whether an isomorphism type is represented exactly once inside a host, a notion that carries over without change once graphs are replaced by uniform hypergraphs.  Thus the $k$-uniform setting is the most direct place to test whether the rarity of unique subgraphs is a general phenomenon.

We now formulate this analogue.  Throughout the paper, a $k$-graph means a $k$-uniform hypergraph.  Given $k$-graphs $G$ and $H$, we say that $G$ is a unique subhypergraph of $H$, again written $G\subseteq^*H$, if $H$ contains exactly one subhypergraph isomorphic to $G$.  For an $n$-vertex $k$-graph $H$, let $f_k(H)$ be the number of non-isomorphic $k$-graphs $G$ on at most $n$ vertices with $G\subseteq^*H$, divided by $2^{\binom nk}/n!$, and let $f_k(n)$ be the maximum of $f_k(H)$ over all $n$-vertex $k$-graphs $H$.

The natural question is then whether the graph case reflects a general phenomenon: for each fixed $k\ge2$, must $f_k(n)$ tend to zero as $n\to\infty$?  Somewhat surprisingly, our main result shows that the answer is no as soon as $k\ge3$.  Thus the rarity of unique subgraphs in Theorem~\ref{thm:BC} is a feature of graphs, not of unique containment itself.  More precisely, we prove the following.

\begin{theorem}\label{thm:main}
For every fixed integer $k\ge 3$,
\[
       \liminf_{n\to\infty} f_k(n) \ge \frac{2}{9}.
\]
\end{theorem}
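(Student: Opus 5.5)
The plan is to take a host of the form $H=K_n^{(k)}\setminus F$: the complete $k$-graph with a small, carefully chosen $k$-graph $F$ of ``marker'' non-edges removed. The point is that the unique subhypergraphs $G$ of $H$ should be essentially all $n$-vertex $k$-graphs whose complement contains exactly one copy of $F$. Note first that $G$ must span all $n$ vertices: the $k$-graphs on at most $n-1$ vertices are only a $2^{-\Omega(n^{k-1})}$ fraction of the normalizer. So write $G=H\setminus S$. Then $\overline G=F\cup S$, and copies of $G$ inside $H$ correspond to copies of $\overline G$ in $K_n^{(k)}$ that contain $F$. Equivalently, they correspond to placements of $F$ inside the $k$-graph $\overline G$ (up to automorphisms of $\overline G$). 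Hence $G\subseteq^* H$ holds precisely when $\overline G$ contains exactly one copy of $F$, counted as a subhypergraph.

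\textbf{Step 1 (reduction to a random count).} I would count isomorphism classes of $n$-vertex $k$-graphs $J=\overline G$ that contain exactly one copy of $F$. A uniformly random labelled $J$ corresponds to a uniformly random isomorphism class, up to the $(1+o(1))$ factor of asymmetry, which also holds for $k\ge3$ by the hypergraph analogue of P\'olya--Wright. So $f_k(H)\ge(1+o(1))\,\Pr[\,J\sim G^{(k)}(n,1/2)\text{ contains exactly one copy of }F\,]$. The problem therefore becomes: find $F=F_n$ for which this probability stays bounded below.

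\textbf{Step 2 (choice of $F$ and Poisson approximation).} I would choose $F$ so that the number $X$ of copies of $F$ in $G^{(k)}(n,1/2)$ has mean $\lambda$ of constant order and is asymptotically Poisson. The mean is $\mathbb E X=\frac{n!}{(n-v_F)!\,|\mathrm{Aut}F|}2^{-e_F}$. Since $e_F$ must be tuned to about $v_F\log_2 n$ with $v_F$ growing, I would take $F$ to be a sparse ``almost-spanning'' structure, such as a loose path or tree-like $k$-graph on a linear number of vertices with $e_F\approx \log_2(n!/|\mathrm{Aut} F|)$. Alternatively, $F$ could be the family of edges through a fixed $(k-1)$-set, restricted suitably. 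The hypothesis $k\ge3$ is meant to enter here. Two copies of $F$ overlapping in many vertices share few edges, because in a $k$-graph a fixed vertex set of size $v$ spans $\binom vk$ edges and a sparse $F$ uses only a tiny fraction of them. This makes the second-moment and overlap terms small. In the graph case $k=2$ this overlap control fails, which is consistent with Theorem~\ref{thm:BC}. Given the overlap bounds, Brun's sieve or Stein--Chen yields $\Pr[X=1]\to\lambda e^{-\lambda}$.

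\textbf{Step 3 (constant).} Optimizing $\lambda e^{-\lambda}$ alone gives $1/e$, but integrality and lattice effects in $e_F$ only let me tune $\lambda$ up to a factor of $2$. Losses from automorphisms of $F$ and from hosts in which the unique copy is not canonical should push the guaranteed bound down to something like $2/9$. Concretely, I would pick $e_F$ so that $\lambda\in[1/2,1]$ or similar, take the worst case over that window, and check that it is at least $2/9$.

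I expect Step 2 to be the main obstacle. The difficulty is the overlap (second-moment) estimate uniformly over all ways two copies of a large $F$ can intersect, while $F$ simultaneously has mean count of constant order. I would first try $F$ a loose Hamilton-like path, whose automorphism group is tiny and whose overlapping copies provably share few edges when $k\ge3$.
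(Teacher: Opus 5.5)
\textbf{The gap is in the choice of $F$ and the missing overlap estimate; your Step~1 equivalence hides the first problem.} Almost every $n$-vertex $k$-graph is asymmetric, so the host must itself be asymmetric. If $\alpha\neq\mathrm{id}$ is an automorphism of $H$ and $E'\subseteq E(H)$ is a copy of an asymmetric $G$, then $\alpha(E')\neq E'$ is a second copy, and so $f_k(H)=o(1)$. Stated correctly: for asymmetric $G$, the number of copies of $G$ in $H=K_n^{(k)}\setminus F$ is $X=\#\{\pi\in S_n:\pi(F)\subseteq\overline G\}$, with $\pi$ ranging over \emph{all} permutations of $[n]$. This $X$ is a multiple of $|\mathrm{Aut}(H)|$. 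Hence ``$\overline G$ contains exactly one copy of $F$'' gives $G\subseteq^*H$ only when $F$ covers at least $n-1$ vertices and has no non-trivial automorphism. Automorphisms are therefore an all-or-nothing obstruction, not a constant-factor loss as your Step~3 suggests.

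Every candidate you name violates this. A non-spanning $F$ on a linear number of vertices leaves many interchangeable uncovered vertices. The end edges of a loose path contain $k-1\ge2$ twin vertices of degree one. In any family of edges through a fixed $(k-1)$-set, the vertices of that set are twins. These candidates also have the wrong density. Constant mean requires $\mathbb E X=n!\,2^{-e(F)}=\Theta(1)$, i.e.\ $e(F)=\log_2 n!+O(1)$, which is average degree $\Theta(\log n)$. A loose path or tree-like $F$ has $e(F)<v(F)$, far below the $\approx v(F)\log_2 n$ edges your own mean formula needs. Its expected count is then $2^{\Theta(n\log n)}$, not $\Theta(1)$.

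Even with the right kind of $F$, the heart of the proof, the overlap estimate, is missing. Your heuristic about vertex sets spanning $\binom vk$ edges is not the relevant mechanism: for spanning $F$, any two placements share all $n$ vertices. What must be controlled is $|F\cap\tau(F)|$ over all $\tau\in S_n$. Concretely, you need an $m$-edge $F$ on $[n]$ with $\frac1{n!}\sum_{\tau\neq\mathrm{id}}2^{|F\cap\tau(F)|}\le1+o(1)$. Given this, with $\lambda=n!\,2^{-m}$ you get $\mathbb E[(X)_2]=n!\,2^{-2m}\sum_{\tau\neq\mathrm{id}}2^{|F\cap\tau(F)|}\le(1+o(1))\lambda^2$. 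Then $\Pr[X=1]\ge\lambda-(1+o(1))\lambda^2$, so no Poisson limit is needed (and a Stein--Chen argument would rest on the same estimate anyway).

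The paper proves this estimate for $F$ chosen uniformly among $m$-subsets of $\binom{[n]}{k}$, with $m\in\{\lceil\log_2 n!\rceil,\lceil\log_2 n!\rceil+1\}$:
\begin{itemize}
\item Every $\tau\neq\mathrm{id}$ moves $D=\Omega(n^{k-1})$ $k$-sets, while $m=O(n\log n)$.
\item Since $m^2/N=o(1)$, which is a key place where $k\ge3$ enters, the moved part of $F$ has almost no $\tau$-adjacencies. This gives $\mathbb E_F\,2^{|F\cap\tau(F)|}\le(1+o(1))(2-D/N)^m$.
\item The average of $(2-D/N)^m$ over $\tau$ is then bounded by splitting according to the numbers of moved and fixed vertices.
\end{itemize}
Finally, $2/9$ is not a Poisson value reduced by automorphism losses. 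With $\lambda_0=n!\,2^{-m_0}\in(1/2,1]$, it equals $\min_{\lambda_0}\max\{\lambda_0-\lambda_0^2,\ \lambda_0/2-\lambda_0^2/4\}$, attained at $\lambda_0=2/3$.
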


The reason for the difference is a simple change of scale.  In graphs, even the smallest non-trivial vertex permutation moves only $\Theta(n)$ edges.  In $k$-graphs with $k\ge3$, it already moves $\Theta(n^{k-1})$ edges, while the sparse pattern used in our construction has only about $\log_2(n!)$ edges.

This gap is the source of the phenomenon. It allows us to delete a sparse pattern from the complete $k$-graph in such a way that the pattern barely overlaps its non-trivial vertex-permuted copies.  The resulting complement then has many unique subhypergraphs on the natural scale.

\paragraph{Paper organization.}
We begin in Section~2 with a reduction from
the original unlabelled problem to a probabilistic statement about unique
subhypergraphs in random labelled $k$-graphs.  The argument in Section~3 then
shows how the main theorem follows from the existence of a suitable sparse
$k$-graph with small average overlap.  This sparse pattern is constructed in
Section~4, where we also prove the required overlap estimate.  The paper ends
with a few concluding remarks in Section~5.

\paragraph{Notation, terminology and preliminaries.}
Throughout the paper, $k\ge3$ is fixed and $n$ tends to infinity.  All
asymptotic notation is with respect to $n\to\infty$; the implicit constants in
$O_k(\cdot)$ and $\Omega_k(\cdot)$ may depend on $k$.
We use $\exp(x)$ for $e^x$, and write
$\mathbf 1_A$ for the indicator of an event $A$. We write $[n]=\{1,\ldots,n\}$, $\mathcal T_n=\binom{[n]}k$, and $N=\binom nk$.  A labelled $k$-graph on $[n]$ is identified with a subset of $\mathcal T_n$, and $K_n^{(k)}$ denotes the complete $k$-graph on $[n]$.

If $\pi\in S_n$ and $F\subseteq\mathcal T_n$, then $\pi(F)$ denotes the $k$-graph obtained by applying $\pi$ to every edge of $F$.  The automorphism group of $F$ is denoted by $\mathrm{Aut}(F)$.  For a $k$-graph $F$, we write $v(F)$ and $e(F)$ for the numbers of vertices and edges of $F$.

An embedding $G\to H$ is an injective map $\phi:V(G)\to V(H)$ such that $\phi(e)\in E(H)$ for every $e\in E(G)$.  We write $G\to^*H$ if there is exactly one embedding from $G$ to $H$.  This is different from $G\subseteq^*H$, which means that there is exactly one unlabelled copy of $G$ in $H$.  The distinction matters only for $k$-graphs with non-trivial automorphisms.

Finally, $\cG^{(k)}(n,p)$ denotes the binomial random $k$-graph on vertex set
$[n]$, where each element of $\mathcal T_n$ is chosen independently with
probability $p$.  In particular, $\cG^{(k)}(n,1/2)$ is the uniform distribution
on labelled $k$-graphs on $[n]$.  We also use $(n)_t=n(n-1)\cdots(n-t+1)$ for the falling factorial. For an integer-valued random variable $X$, we write $(X)_2=X(X-1)$ for its second falling factorial, so that $\cE\bigl[(X)_2\bigr]$ is the factorial second
moment of $X$.

\section{A Probabilistic Reformulation}

Throughout this section, let $k\ge 3$ be fixed and put $N=\binom{n}{k}$. Let
$\mathcal U_k(n)$ be the set of unlabelled $k$-graphs on $n$ vertices, and let $\mathcal L_k(n)$ be the set of labelled $k$-graphs on vertex set $[n]$.
We use the following enumeration theorem for uniform hypergraphs.
\begin{theorem}[\cite{Qia14}]\label{thm:qian}
For any integer $k$ with $2\le k\le n/2$,  $|\mathcal U_k(n)|=(1+o(1))2^N/n!$.
\end{theorem}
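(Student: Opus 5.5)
The plan is to prove this by orbit counting (Burnside's lemma) applied to the action of $S_n$ on $\mathcal L_k(n)$. Every $\pi\in S_n$ acts on $\mathcal T_n$. A labelled $k$-graph is fixed by $\pi$ exactly when it is a union of cycles of this induced permutation of $\mathcal T_n$. Writing $c(\pi)$ for the number of such cycles, Burnside gives
\[
|\mathcal U_k(n)|=\frac1{n!}\sum_{\pi\in S_n}2^{c(\pi)}=\frac{2^N}{n!}\Bigl(1+\sum_{\pi\neq \mathrm{id}}2^{c(\pi)-N}\Bigr).
\]
The identity contributes $2^N/n!$. So it suffices to show that $\Sigma:=\sum_{\pi\ne\mathrm{id}}2^{c(\pi)-N}=o(1)$, uniformly for $2\le k\le n/2$. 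Since the $\pi$-orbits in $\mathcal T_n$ that are not singletons have size at least $2$, we have $c(\pi)\le N-M(\pi)/2$, where $M(\pi)$ is the number of $k$-sets not fixed by $\pi$. The task is therefore to lower-bound $M(\pi)$ in terms of the support size $m=|\{x:\pi(x)\ne x\}|$. The number of $\pi$ with support of size $m$ is at most $\binom nm m!\le n^m$.

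First I would treat small support, say $2\le m\le n/2$. Any $k$-set containing exactly one point of the support is moved, because a $\pi$-invariant set must contain whole cycles and every nontrivial cycle has length at least $2$. Hence
\[
M(\pi)\ge m\binom{n-m}{k-1}\ge m\binom{n/2}{k-1},
\]
and this range contributes at most $\sum_{m\ge2}\bigl(n\,2^{-\binom{n/2}{k-1}/2}\bigr)^m$. Since $k\ge2$ and $k\le n/2$, we have $\binom{n/2}{k-1}\ge n/2$, which beats $\log_2 n$. So this part is $O(n^2 2^{-n/2})=o(1)$.

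For large support, $m>n/2$, this count is too weak, because $n^m$ can be as large as $n!$. Here I would use a bound that works for all $\pi$: the fraction of $k$-sets fixed by $\pi$ is small. A fixed $k$-set is a union of cycles of $\pi$. When $\pi$ has at least $n/2$ non-fixed points, a uniformly random $k$-set is fixed with probability at most a constant $<1$; more concretely, one can pair moved points $x,\pi(x)$ into many disjoint pairs and note that a fixed set cannot separate them. This should give $M(\pi)\ge cN$ with $N=\binom nk\ge\binom n2$. Then the total contribution is at most $n!\,2^{-cN/2}\le n^n 2^{-cn^2/4}\cdot(\text{const})=o(1)$, using $N\ge\binom n2$ for $2\le k\le n/2$.

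I expect the main obstacle to be making the large-support bound $M(\pi)\ge cN$ rigorous and uniform in $k$ across the whole range $2\le k\le n/2$. When $k$ is near $n/2$, $k$-sets are large, and the probability that a random $k$-set separates a given pair $\{x,\pi(x)\}$ is about $1/2$. So I would argue via a matching of $\ge n/4$ disjoint pairs $\{x,\pi(x)\}$: a random $k$-set is fixed only if it splits none of them, and a hypergeometric estimate shows this has probability bounded away from $1$ (in fact exponentially small in $\min(k,n-k)$). For intermediate supports and small $k$ (e.g.\ $k=2$, where the small-support bound only gives a margin of order $n$ per moved point), I would check carefully that the two regimes overlap.
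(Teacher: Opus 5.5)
The paper gives no proof of this statement: it is quoted from \cite{Qia14} and used only inside Lemma~\ref{lem:labelled-reduction}. Your Burnside argument is a correct, self-contained proof.

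There are two harmless slips. First, taking $\lfloor \ell/2\rfloor$ pairs $\{x,\pi(x)\}$ from each nontrivial cycle of length $\ell$ guarantees only $p\ge m/3>n/6$ disjoint pairs, not $n/4$; a product of $3$-cycles is the extremal case. Second, $n!\,2^{-cN/2}\le n^n2^{-cn(n-1)/4}$ carries a factor $2^{cn/4}$ rather than a constant. Neither affects the conclusion.

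The step you leave as ``a hypergeometric estimate'' closes quickly. Let $U$ be the union of the $p$ pairs and $e$ a uniform $k$-set; if $\pi(e)=e$, then $e\cap U$ is a union of pairs.
\begin{itemize}
\item The event $e\cap U=\emptyset$ has probability at most $(1-2p/n)^k<4/9$.
\item The event $e\supseteq U$ has probability at most $(k/n)^{2p}\le 2^{-2p}$.
\item Conditional on $|e\cap U|=j$ with $0<j<2p$, the set $e\cap U$ is a uniform $j$-subset of $U$. It is a union of pairs with probability $0$ for odd $j$, and $\binom{p}{j/2}/\binom{2p}{j}\le 1/(2p-1)$ for even $j$.
\end{itemize}
Hence $M(\pi)\ge N/2$ for large $n$, uniformly in $2\le k\le n/2$, and your two regimes together cover all $\pi\ne\mathrm{id}$.

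Citing \cite{Qia14} gives the full uniform range at no cost. Your route instead buys self-containment, and it targets exactly what the paper needs: Lemma~\ref{lem:labelled-reduction} uses the theorem only through $|\mathcal A_k(n)|\le 2\bigl(|\mathcal U_k(n)|-2^N/n!\bigr)$, and by Burnside the bracket equals $\Sigma\,2^N/n!$.

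For the paper's range $k\ge3$, your case split is unnecessary. Take a single moved point $x$ with $y=\pi(x)$: every $k$-set containing $x$ but not $y$ is moved. This is the same observation that gives $D=\Omega_k(n^{k-1})$ in Claim~\ref{claim:single-permutation}. So $M(\pi)\ge\binom{n-2}{k-1}\ge\binom{n-2}{2}$, and $\Sigma\le n!\,2^{-\binom{n-2}{2}/2}=o(1)$ in one line. Your two-regime analysis is only needed for $k=2$, where one moved point moves just $\Theta(n)$ two-element sets, which cannot beat the $n!$ permutations.
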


The reduction below follows the argument of Brada\v{c} and Christoph~\cite{BradacChristoph2025} in the graph case.  For completeness, we present the details in the setting of $k$-uniform hypergraphs.

\begin{lemma}\label{lem:labelled-reduction}
Let $k\ge 3$.
For every $n$-vertex $k$-graph $H$, we have
\[
        f_k(H)=\cP_{G\sim\cG^{(k)}(n,1/2)}[G\to^*H]+o(1).
\]
\end{lemma}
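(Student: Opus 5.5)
We compare unlabelled and labelled counts through the orbit–stabilizer relation and handle the small exceptional classes with Theorem~\ref{thm:qian}. Write $f_k(H)=\frac{n!}{2^N}\,\#\{[G]\in\mathcal U_k(n)\text{ up to }n\text{ vertices}: G\subseteq^*H\}$. Here graphs on fewer than $n$ vertices are padded with isolated vertices to $n$-vertex $k$-graphs. This padding is harmless: an $n$-vertex host can only receive $G$ on exactly $v(G)$ vertices, and the number of unlabelled $k$-graphs on at most $n-1$ vertices is $O(2^{\binom{n-1}k})$, which is $o(2^N/n!)$ since $\binom{n-1}{k-1}\gg n\log n$. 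So up to $o(1)$ we count only $n$-vertex $G$. This step needs care: a $G$ with isolated vertices regarded on $n$ vertices is the same object, so I would simply restrict to $v(G)=n$ throughout.

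\textbf{Main step.} Every labelled $G\in\mathcal L_k(n)$ lies in an isomorphism class of size $n!/|\mathrm{Aut}(G)|$. For $G$ on $n$ vertices, the number of embeddings $G\to H$ equals $|\mathrm{Aut}(G)|$ times the number of copies of $G$ in $H$. Hence $G\to^*H$ holds if and only if $G\subseteq^*H$ and $\mathrm{Aut}(G)$ is trivial. The class of an asymmetric $G$ has exactly $n!$ labelled members, and the property $G\to^*H$ depends only on the class. Therefore
\[
\cP[G\to^*H]=\frac{n!}{2^N}\,\#\{[G]:G\subseteq^*H,\ \mathrm{Aut}(G)=1\}.
\]
Consequently $f_k(H)-\cP[G\to^*H]$ is nonnegative and at most $\frac{n!}{2^N}\,\#\{[G]\in\mathcal U_k(n):\mathrm{Aut}(G)\neq1\}$.

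\textbf{Bounding symmetric classes.} The error is controlled by Theorem~\ref{thm:qian}. Classes with $|\mathrm{Aut}|=1$ contribute $n!$ labelled graphs each, and all other classes contribute at least one. Counting labelled graphs gives
\[
n!\,A+S\le 2^N,
\]
where $A$ is the number of asymmetric classes and $S$ the number of symmetric ones. Since $A+S=(1+o(1))2^N/n!$, this yields $A\le 2^N/n!$, and hence $S=o(2^N/n!)$? Not quite: from the two relations we only get $S\ge o(\cdot)$ from below. So instead I would directly bound the number of labelled graphs fixed by some nontrivial $\pi$, namely
\[
\sum_{\pi\ne\mathrm{id}}2^{\mathrm{cyc}_k(\pi)},
\]
where $\mathrm{cyc}_k(\pi)$ is the number of cycles of $\pi$ on $\mathcal T_n$. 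A permutation moving $m$ vertices has $\mathrm{cyc}_k(\pi)\le N-\Omega(m\binom{n-2}{k-1})$. The resulting sum $\sum_m \binom nm m!\,2^{-\Omega(mn^{k-1})}$ is $o(1)\cdot 2^N$, indeed $o(2^N/n!)$, since $n^{k-1}\gg\log n$. The number of symmetric classes is at most the number of labelled symmetric graphs, so $S=o(2^N/n!)$ and the error term is $o(1)$. This cycle-count estimate is the only real obstacle; transpositions dominate, each giving a loss of about $\tfrac12\binom{n-2}{k-1}$ in the exponent.
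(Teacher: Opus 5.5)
Your proof is correct after two small repairs, but it handles the one substantive step differently from the paper. Two steps match the paper exactly: discarding graphs on fewer than $n$ vertices, and the equivalence of $G\to^*H$ with ``$G\subseteq^*H$ and $|\mathrm{Aut}(G)|=1$''. The difference is how you show that the number $S$ of symmetric classes is $o(2^N/n!)$.

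The paper gets this from Theorem~\ref{thm:qian}, and the route you abandoned does work. Bound each symmetric class from \emph{above} by $n!/2$ labellings, not from below by $1$. Then $2^N\le n!(A+S)-\frac{n!}{2}S$, and $A+S=(1+o(1))2^N/n!$ forces $S=o(2^N/n!)$.

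You instead bound $S$ by the number of labelled $k$-graphs fixed by some $\pi\ne\mathrm{id}$, that is, by $\sum_{\pi\ne\mathrm{id}}2^{\mathrm{cyc}_k(\pi)}$. This is self-contained and even recovers Theorem~\ref{thm:qian} for fixed $k\ge3$. It is much cruder, though, since counting classes by labelled graphs can lose a factor of $n!$. It survives only because $k\ge3$: for $k=2$, transpositions alone contribute $\binom n2 2^{N-n+2}\gg 2^N/n!$. The paper's route needs no such slack and instead relies on the enumeration theorem.

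\textbf{Repair 1.} The cycle estimate you flag as the obstacle takes one line. Each of the $m$ moved vertices $x$ lies in $\binom{n-2}{k-1}$ $k$-sets avoiding $\pi(x)$, and every such set is moved. Each moved set is counted at most $k$ times, so at least $m\binom{n-2}{k-1}/k$ $k$-sets are moved. Since non-trivial cycles have length at least $2$, $\mathrm{cyc}_k(\pi)\le N-m\binom{n-2}{k-1}/(2k)$. (For a transposition the loss is exactly $\binom{n-2}{k-1}$, not half of it.)

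\textbf{Repair 2.} Your final justification is wrong as stated: $n^{k-1}\gg\log n$ only gives $o(1)\cdot 2^N$. To get $o(2^N/n!)$ you need $n!\sum_{m\ge2}n^m2^{-m\binom{n-2}{k-1}/(2k)}\to0$. The $m=2$ term requires $\binom{n-2}{k-1}\gg\log_2 n!\sim n\log_2 n$, which holds precisely because $k\ge3$.
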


\begin{proof}
    Let $\mathcal A_k(n)=\{G\in\mathcal U_k(n):|\operatorname{Aut}(G)|\ge 2\}$ be the set of unlabelled $k$-graphs on $n$ vertices with a non-trivial automorphism.
    Since an unlabelled $k$-graph $G$ has exactly $n!/|\operatorname{Aut}(G)|$ distinct labellings, we have
\[
\begin{aligned}
        2^N
        =|\mathcal L_k(n)|
        =\sum_{G\in\mathcal U_k(n)}
          \frac{n!}{|\operatorname{Aut}(G)|}   
        \le n!|\mathcal U_k(n)|
          -\frac{n!}{2}|\mathcal A_k(n)|.
\end{aligned}
\]
    Using Theorem \ref{thm:qian} and dividing by $n!$, it follows that $|\mathcal A_k(n)|=o(2^N/n!)$.

    The number of unlabelled $k$-graphs on at most $n-1$ vertices is at most $\sum_{i=0}^{n-1}2^{\binom{i}{k}}
        \le n2^{\binom{n-1}{k}}=o\left(\frac{2^N}{n!}\right)$.
        Using this and $|\mathcal A_k(n)|=o(2^N/n!)$, we have
    \[
\begin{aligned}
        f_k(H)
        &=
        \frac{n!}{2^N}
        \left|
        \{G\in\mathcal U_k(n):G\subseteq^* H\}
        \right|+o(1)                                                        \\
        &=
        \frac{n!}{2^N}
        \left|
        \{G\in\mathcal U_k(n):G\subseteq^* H
        \text{ and }|\operatorname{Aut}(G)|=1\}
        \right|+o(1)                                                        \\
        &=
        2^{-N}
        \left|
        \{G\in\mathcal L_k(n):G\subseteq^* H
        \text{ and }|\operatorname{Aut}(G)|=1\}
        \right|+o(1)                                                        \\
        &=
        \mathbb P_{G\sim \mathcal{G}^{(k)}(n,1/2)}
        \bigl[
        G\subseteq^* H
        \text{ and }|\operatorname{Aut}(G)|=1
        \bigr]+o(1).
\end{aligned}
\]
For an $n$-vertex $k$-graph $G$, each unlabelled copy of $G$ in $H$
    gives exactly $|\operatorname{Aut}(G)|$ embeddings of $G$ into $H$.
    Hence, for such $G$, $G\to^*H$ holds if and only if
    $G\subseteq^*H$ and $|\operatorname{Aut}(G)|=1$. This completes the proof.
\end{proof}

\section{Proof of Theorem \ref{thm:main}}
In this section, we state the sparse-overlap property needed for the
argument and show that it implies Theorem~\ref{thm:main}.  The existence
of a $k$-graph with this property will be proved in the next section.

\begin{proposition}\label{prop:overlap}
Let \(m\) be an integer such that \(m=\log_2(n!)+O(1)\). Then there exists an $m$-edge $k$-graph $F$ on $[n]$ such that
\[
        \frac1{n!}\sum_{\tau\in S_n,\ \tau\ne\id}
        2^{|F\cap\tau(F)|}
        \le 1+o(1).
\]
\end{proposition}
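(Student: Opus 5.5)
Our plan is to take $F$ to be a uniformly random $m$-subset of $\mathcal T_n$ and bound the expectation of the left-hand side. Linearity of expectation then reduces the problem to a single permutation: it suffices to show
\[
\frac1{n!}\sum_{\tau\ne\id}\mathbb E\bigl[2^{|F\cap\tau(F)|}\bigr]\le 1+o(1),
\]
after which some fixed choice of $F$ achieves the bound. We group the permutations $\tau$ by their number $s$ of non-fixed points, where $2\le s\le n$. There are at most $\binom ns s!\le n^s$ such permutations.

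\emph{Step 1: how many edges $\tau$ moves.} Let $M_\tau$ be the set of $k$-sets $e$ with $\tau(e)\ne e$. Any $k$-set that meets the support of $\tau$ in exactly one point is moved. This gives $|M_\tau|\ge s\binom{n-s}{k-1}$ when $s\le n/2$, and $|M_\tau|=\Omega_k(n^k)$ when $s>n/2$. In either case $|M_\tau|=\Omega_k(s\,n^{k-1})$. Writing $q=m/N=\Theta(n\log n/n^k)$, we have $|F\cap\tau(F)|\le |\mathrm{Fix}_\tau\cap F|+|F\cap M_\tau\cap\tau(F)|$, where $\mathrm{Fix}_\tau=\mathcal T_n\setminus M_\tau$ is the set of edges fixed by $\tau$. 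We bound the two parts separately.

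\emph{Step 2: fixed edges.} Since $|F\cap \mathrm{Fix}_\tau|$ is hypergeometric, and therefore dominated in exponential moments by a binomial, we get
\[
\mathbb E\, 2^{|F\cap \mathrm{Fix}_\tau|}\lesssim (1+q)^{N-|M_\tau|}\le 2^{m}\exp(-q|M_\tau|).
\]
Since $2^m=\Theta(n!)$, the $2^m$ factor cancels against the normalization $1/n!$. What remains is $\exp(-q|M_\tau|)=\exp(-\Omega_k(s\log n))=n^{-C s}$ with $C$ large once $k\ge 3$. The extra factor of $n$ coming from $n^{k-1}$ is what makes this work: $q\,s\,n^{k-1}\approx s\log_2(n!)/n\cdot(k/1)\cdots$. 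A more careful computation gives $q|M_\tau|\ge c_k s\log n\cdot$, where $c_k$ must beat $1$ in order to absorb the $n^s$ count of permutations. This point needs checking. For $s$ close to $n$ we have $q|M_\tau|\approx m(1-O(1/n))$ and the count is $n!$, so here we must be exact: $\mathbb E\,2^{|F\cap \mathrm{Fix}_\tau|}\le \exp(q|\mathrm{Fix}_\tau|)$ and $|\mathrm{Fix}_\tau|q=O(n^{k-1}\cdot$ small$)$. Handling the small-$s$ and large-$s$ regimes consistently is the main bookkeeping.

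\emph{Step 3: moved edges.} This is the main obstacle. We split $M_\tau$ into $\tau$-orbits, which are cycles of length at least $2$ on $k$-sets, and bound $\mathbb E\,2^{|F\cap\tau(F)\cap M_\tau|}$. The plan is to approximate $F$ by a $q$-random set. For distinct $e$ and $\tau^{-1}(e)$, the probability that both lie in $F$ is about $q^2$, so
\[
\mathbb E\,2^{X}\le \exp\bigl(O(q^2|M_\tau|)\bigr)=\exp\bigl(O(s\,n^{k-1}\cdot n^2\log^2 n/n^{2k})\bigr).
\]
This is $1+o(1)$ uniformly for $k\ge3$, because the exponent is $O(s\log^2 n/n^{k-2})$ and $n^{-C s}$ from Step 2 absorbs it. Along each orbit, the indicators $\mathbf 1[e\in F]\mathbf 1[\tau^{-1}e\in F]$ are dependent, but only through a cyclic chain. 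So we bound the exponential moment orbit by orbit, using that a cycle of length $\ell$ contributes at most $\ell q^2(1+O(q))$. We then pass from the binomial model to the uniform $m$-set model at the cost of a factor $O(\sqrt m)$. That factor is absorbed by the superpolynomial decay in every regime except $\tau=$ near-identity with $s=2$, where we instead compute directly.

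\emph{Summing.} Putting the steps together, the sum is at most
\[
\sum_{s\ge2} n^{s}\,\frac{2^m}{n!}\,n^{-Cs}\,(1+o(1)) \;+\;\frac{1}{n!}\sum_{s\text{ large}}\cdots .
\]
The first sum is $o(1)$. The permutations with almost full support contribute $\frac{1}{n!}\cdot n!\cdot\frac{2^m}{n!}\cdot\exp(-q|M_\tau|)$. Since $m=\log_2(n!)+O(1)$, the value $1$ in the bound $1+o(1)$ comes exactly from these generic permutations. We will therefore choose the additive constant in $m$ appropriately, or rely on $e^{-q|M_\tau|}=2^{-m}(1+o(1))$.
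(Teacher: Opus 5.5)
Your overall plan matches the paper's: a uniform random $m$-set $F$, linearity over $\tau$, splitting $|F\cap\tau(F)|$ into fixed and moved $k$-sets, and grouping $\tau$ by support size. The execution has a genuine gap, though. You move between the binomial and the uniform model in ways that lose far more than the $1+o(1)$ target allows.

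\textbf{Step 2.} The inequality $(1+q)^{N-|M_\tau|}\le 2^m\exp(-q|M_\tau|)$ is false. Since $qN=m$ and $m^2/N=o(1)$, the left side is $\exp(m-q|M_\tau|+o(1))$, which exceeds the right side by a factor $(e/2)^m$. A sanity check confirms this. Since $2^{|F\cap\tau(F)|}\ge1$, the average is at least $1-1/n!$. Yet your bound would make the near-derangements contribute about $(2/e)^m\to0$. For the same reason, in your last paragraph $e^{-q|M_\tau|}\approx e^{-m}$, not $2^{-m}$.

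The correct bound uses $|F|=m$ exactly. The fixed part equals $m-R$ with $R=|F\cap M_\tau|$ hypergeometric, so $\mathbb E\,2^{m-R}\le 2^m(1-|M_\tau|/(2N))^m\le 2^m\exp(-q|M_\tau|/2)$, by Maclaurin or Hoeffding's comparison with $\mathrm{Bin}(m,|M_\tau|/N)$. This is essentially sharp. Because of the factor $1/2$, absorbing the $n^s$ permutations is a tight quantitative matter, not a case of ``$C$ large''. It needs something like $1-(1-\alpha)^k\ge(2\log2+\eta)\alpha$ on $(0,1/2]$. It also needs $|M_\tau|\ge N(1-(1-s/n)^k)-O_k(sn^{k-2})$, which uses the fact that a fixed $k$-set meets the support in a union of cycles. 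Your Step 1 count (sets meeting the support in exactly one point) is too weak for this. For $k=3$ and $s=n/2$ it gives $|M_\tau|/N\approx 3/8$, while $\log 2$ is required.

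\textbf{Combining Steps 2 and 3.} These cannot be combined as proposed. Under a uniform $m$-set the two parts are dependent. An $O(\sqrt m)$ de-Poissonization loss is fatal precisely for the near-derangements, which carry the main term $1$ and have no decay to absorb it. The transpositions you single out are in fact harmless.

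The missing idea is to stay in the uniform model and condition on $R=r$. Then:
\begin{itemize}
\item the fixed part is exactly $m-r$, and $B=F\cap M_\tau$ is a uniform $r$-subset of $M_\tau$;
\item $\mathbb P(S\subseteq B)\le(r/|M_\tau|)^{|S|}$ lets your orbit-by-orbit expansion run without loss, giving $\mathbb E[2^{|B\cap\tau(B)|}\mid R=r]\le\exp(Cr^2/|M_\tau|)$;
\item with $r^2\le mr$ this becomes $2^m(\frac12 e^{Cm/|M_\tau|})^R$, and Maclaurin gives $\mathbb E\,2^{|F\cap\tau(F)|}\le(1+o(1))(2-|M_\tau|/N)^m$.
\end{itemize}

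\textbf{Large-support regime.} This still needs an actual argument. Grouping by the number $f$ of fixed points, you must show $\sum_{f<n/2}p_{n,f}(1+a_{n,f})^m\le 1+o(1)$ with $a_{n,f}\approx(f/n)^k$. This uses $p_{n,f}\to e^{-1}/f!$ for bounded $f$ and $p_{n,f}\le 1/f!$ otherwise, split over three ranges of $f$.

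Finally, you cannot ``choose the additive constant in $m$''. The proposition must hold for every $m=\log_2(n!)+O(1)$, and the main theorem uses two such values.
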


We now finish the proof of Theorem~\ref{thm:main}.  

\begin{proof}[Proof of Theorem \ref{thm:main}]
Let $m_0=\lceil\log_2 n!\rceil$, $m_1=m_0+1$. For $i=0,1$, fix an $m_i$-edge $k$-graph $F_i$ as in
Proposition~\ref{prop:overlap}, and let $H_i=K_n^{(k)}\setminus F_i$.
Let $G\sim\cG^{(k)}(n,1/2)$ and $R=K_n^{(k)}\setminus G$.  Note that the map $\phi\mapsto\phi^{-1}$ gives a bijection
between embeddings $G\to H_i$ and embeddings $F_i\to R$.  Indeed,
$\phi(G)\subseteq H_i$ holds precisely when no edge of $F_i$ lies in
$\phi(G)$, which is precisely the condition that $\phi^{-1}(F_i)\subseteq R$.
Let $X_i$ denote the number of labelled embeddings of $F_i$ in $R$.  Then
\[
        X_i=\sum_{\pi\in S_n}\mathbf 1_{\{\pi(F_i)\subseteq R\}}.
\]
Consequently, \(X_i\) is exactly the number of labelled embeddings from \(G\)
to \(H_i\). Hence \(G\to^* H_i\) holds precisely when \(X_i=1\), and therefore
\[
        \mathbb P_{G\sim\mathcal G^{(k)}(n,1/2)}[G\to^*H_i]
        =
        \mathbb P[X_i=1].
\]
Let $\lambda_i=\cE X_i=n!2^{-m_i}$.
Then we have $\lambda_0\in(1/2,1]$, $\lambda_1=\lambda_0/2\in(1/4,1/2]$.

Recall that $(X_i)_2=X_i(X_i-1)$.  Thus $\cE\bigl[(X_i)_2\bigr]$ is the expected number
of ordered pairs of distinct labelled copies of $F_i$ in $R$.  Hence
\[
        \cE\bigl[(X_i)_2\bigr]
        =
        \sum_{\pi\ne\sigma}
        \cP\bigl(\pi(F_i)\cup\sigma(F_i)\subseteq R\bigr)
        =
        \sum_{\pi\ne\sigma}2^{-|\pi(F_i)\cup\sigma(F_i)|}.
\]
Putting $\tau=\pi^{-1}\sigma$, we have $\tau\ne\id$, and for each fixed
$\tau$ there are $n!$ choices of $\pi$.  Also
\[
        |\pi(F_i)\cup\sigma(F_i)|
        =
        |F_i\cup\tau(F_i)|
        =
        2m_i-|F_i\cap\tau(F_i)|.
\]
Therefore
\[
        \cE\bigl[(X_i)_2\bigr]
        =
        n!2^{-2m_i}
        \sum_{\tau\ne\id}2^{|F_i\cap\tau(F_i)|}
        \le
        (1+o(1))n!^2 2^{-2m_i}
        =
        (1+o(1))\lambda_i^2,
\]
where the inequality follows from Proposition~\ref{prop:overlap}.

Since $X_i$ is integer-valued, $ \cP[X_i=1]\ge \cE X_i-\cE\bigl[(X_i)_2\bigr]$. 
Therefore, $\cP[X_i=1]\ge \lambda_i-(1+o(1))\lambda_i^2.$ Since $\lambda_0\in(1/2,1]$ and $\lambda_1=\lambda_0/2$, we get
\[
\max_{i=0,1}\cP[X_i=1]
\ge
\max\left\{
        \lambda_0-\lambda_0^2,\,
        \frac{\lambda_0}{2}-\frac{\lambda_0^2}{4}
\right\}
-o(1).
\]
The minimum of the last maximum over $\lambda_0\in(1/2,1]$ is $2/9$, attained when
$\lambda_0=2/3$.  Hence, for this choice of $i=i(n)$, $ \cP[X_i=1]\ge \frac{2}{9}-o(1).$ By Lemma~\ref{lem:labelled-reduction},
\[
        f_k(H_i)=\cP_{G\sim\cG^{(k)}(n,1/2)}[G\to^*H_i]+o(1)
        =\cP[X_i=1]+o(1)
        \ge \frac{2}{9}-o(1).
\]
Thus
\[
        f_k(n)\ge f_k(H_i)\ge \frac{2}{9}-o(1).
\]
\end{proof}


\section{Sparse $k$-Graphs with Small Overlap}
In this section we prove Proposition~\ref{prop:overlap}.  We first record two
elementary estimates for random subsets chosen without replacement.  The first
is a direct consequence of Maclaurin's inequality~\cite{HLP52} for elementary symmetric means.

\begin{lemma}\label{lem:hypergeom}
Let \(R\) be the number of marked elements in a uniformly chosen \(m\)-element
subset of an \(N\)-element set with \(D\) marked elements.  Set \(q=D/N\).
Then, for every \(0\le a\le1\),
\[
        \cE \bigl[a^R\bigr]\le (1-q+qa)^m.
\]
\end{lemma}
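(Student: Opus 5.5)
We need to prove $\cE[a^R]\le(1-q+qa)^m$ for a hypergeometric $R$ and $0\le a\le1$. Write $b=1-a\in[0,1]$, so that
\[
a^R=(1-b)^R=\sum_{j\ge0}(-b)^j\binom{R}{j}.
\]
The plan is to compute $\cE\binom Rj$ exactly and then compare the resulting expression with the binomial expansion of $(1-qb)^m=(1-q+qa)^m$.

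\textbf{Exact formula.} Label the $m$ chosen elements and observe that $\binom Rj$ counts $j$-subsets of the sample consisting entirely of marked elements. By exchangeability, each fixed $j$-subset of sample positions is all marked with probability $(D)_j/(N)_j$. Hence
\[
\cE\Bigl[\binom Rj\Bigr]=\binom mj\frac{(D)_j}{(N)_j},
\qquad
\cE[a^R]=\sum_{j=0}^m\binom mj(-b)^j\frac{(D)_j}{(N)_j}.
\]
This is a polynomial in $b$, whereas the target is $\sum_j\binom mj(-b)^jq^j$. Since the signs alternate, a term-by-term comparison is not possible. So we need a genuine inequality.

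\textbf{Maclaurin's inequality.} Consider the multiset $x_1,\dots,x_N$ with $x_i=b$ for the $D$ marked elements and $x_i=0$ otherwise. Let $S$ be the random $m$-subset and write
\[
a^R=\prod_{i\in S}(1-x_i).
\]
Averaging over $S$ gives the $m$-th elementary symmetric mean $E_m(y)$ of the numbers $y_i=1-x_i\in[0,1]$. The elementary symmetric means of $N$ nonnegative reals satisfy Maclaurin's inequality
\[
E_m^{1/m}\le E_1 .
\]
Here
\[
E_1=\frac1N\sum_i y_i=1-qb=1-q+qa.
\]
Therefore
\[
\cE[a^R]=E_m(y)\le E_1^m=(1-q+qa)^m,
\]
which is exactly the claim. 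The degenerate cases are routine: $m=0$ is trivial, and if $a=0$ some $y_i$ vanish, but Maclaurin's inequality still holds for nonnegative reals by continuity.

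\textbf{Main obstacle.} The key step is to recognise the expectation as an elementary symmetric mean. Concretely, $\cE\prod_{i\in S}y_i=e_m(y)/\binom Nm=E_m(y)$. Once this identification is made, the bound is a single application of Maclaurin's inequality, with the nonnegativity of the $y_i$ being precisely where the hypothesis $0\le a\le1$ is used. If one preferred to avoid Maclaurin's inequality, the fallback would be Hoeffding's classical comparison: sampling without replacement is dominated in convex order by sampling with replacement. Since $x\mapsto a^x$ is convex, this gives
\[
\cE a^R\le\cE a^{\mathrm{Bin}(m,q)}=(1-q+qa)^m .
\]
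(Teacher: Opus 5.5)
Your proof is correct and is essentially the paper's argument: you write $\mathbb{E}[a^R]$ as the $m$-th elementary symmetric mean of $D$ copies of $a$ and $N-D$ copies of $1$, and apply Maclaurin's inequality $E_m\le E_1^m$ with $E_1=1-q+qa$. The exact-moment computation and the Hoeffding fallback are not needed, but they are harmless.
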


\begin{proof}
Consider the \(N\) weights consisting of \(D\) copies of \(a\) and \(N-D\)
copies of \(1\).  Then \(\cE \bigl[a^R\bigr]\) is the \(m\)th elementary symmetric mean of
these weights.  By Maclaurin's inequality, this is at most the \(m\)th power of
their arithmetic mean, namely
\[
        \left(\frac{Da+N-D}{N}\right)^m=(1-q+qa)^m .
\]
\end{proof}

The second estimate says that a sparse random set has few adjacencies along a permutation without fixed points.

\begin{lemma}\label{lem:cycle-sampling}
There is an absolute constant $C$ such that the following holds.  Let $\rho$ be
a fixed-point-free permutation of a finite set $\Omega$ with $|\Omega|=D$.  Let
$B$ be a uniformly random $r$-element subset of $\Omega$, where $r\le D/2$.
Then
\[
        \cE \bigl[2^{|B\cap \rho(B)|}\bigr]
        \le \exp\left(C\frac{r^2}{D}\right).
\]
\end{lemma}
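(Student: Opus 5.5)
We need to prove Lemma~\ref{lem:cycle-sampling}: for a fixed-point-free $\rho$ on $\Omega$ with $|\Omega|=D$ and a uniform $r$-subset $B$ with $r\le D/2$, we have $\cE[2^{|B\cap\rho(B)|}]\le\exp(Cr^2/D)$.

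The plan is to reduce to independent sampling, split each cycle of $\rho$ into independent pieces, and bound the resulting product. Write $Z=|B\cap\rho(B)|=\#\{x\in B:\rho^{-1}(x)\in B\}$, the number of ``adjacencies'' of $B$ along the cycles of $\rho$.

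\emph{Step 1: Poissonization.} Let $B_p$ include each element of $\Omega$ independently with probability $p=r/D\le 1/2$. Since $Z$ is monotone increasing in $B$, $\cE[2^{Z(B_r)}]$ is nondecreasing in $r$, where $B_r$ denotes a uniform $r$-set. The size $|B_p|$ is binomial with median at most $\lceil pD\rceil$, so $\cP(|B_p|\ge r)\ge 1/2$. Conditioning on $|B_p|=s$ gives a uniform $s$-set, hence
\[
\cE\bigl[2^{Z(B_p)}\bigr]\ge \tfrac12\,\cE\bigl[2^{Z(B_r)}\bigr].
\]
The factor $2$ is harmless when $r^2/D\ge 1$. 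When $r^2/D$ is small we need more care. There I would instead use $p'=2r/D$, which is still fine since $\exp(Cr^2/D)$ absorbs constants once $r^2/D\gtrsim1$. In the regime $r^2/D<1$, I would argue directly: $2^Z\le 1+Z\cdot 2^{Z}$-type union bounds, or $\cE[2^Z]=\sum_S\cP(S\subseteq \text{adjacent set})$, where the expected number of $j$-sets of adjacencies is at most $\binom{D}{j}(r/D)^{2j}\cdot O(1)^j\le (O(r^2/D))^j/j!$ roughly. In fact this direct expansion $\cE[2^Z]=\sum_j\cE\binom{Z}{j}$ may handle all regimes uniformly. The catch is that adjacencies sharing vertices, such as chains along a cycle, need only $j+(\text{number of runs})$ vertices, which makes the counting delicate.

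\emph{Step 2: independent case.} With $B_p$, decompose $\Omega$ into cycles of $\rho$. The quantity $2^Z$ factorizes over cycles, and cycles are independent. For a cycle of length $\ell\ge2$, $\cE[2^{Z_{\text{cycle}}}]$ is a transfer-matrix quantity: $\operatorname{tr}(M^\ell)$ with
\[
M=\begin{pmatrix}1-p & p\\ 1-p & 2p\end{pmatrix}
\]
(rows indexed by the previous state and columns by the next). The largest eigenvalue is $\lambda=1+p^2+O(p^3)$, since $\det(M-\lambda I)=0$ gives $\lambda^2-(1+p)\lambda+p(1-p)=0$, with root $\lambda\approx 1+p^2$. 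The other eigenvalue is $\mu=p(1-p)/\lambda\in[0,1/2]$. Hence
\[
\operatorname{tr}M^\ell=\lambda^\ell+\mu^\ell\le \lambda^\ell(1+2^{-\ell})\le e^{\ell(p^2+O(p^3))}\,(1+2^{-\ell}).
\]
The $2$-cycle case ($\ell=2$) should be checked explicitly, since there $B\cap\rho(B)$ counts both elements of the pair. Multiplying over cycles and using $\sum\ell=D$ gives a main factor $\exp(O(p^2D))=\exp(O(r^2/D))$. The correction factors $(1+2^{-\ell})$ are the problem, since there may be up to $D/2$ cycles. I would refine the estimate to $\operatorname{tr}M^\ell\le 1+O(\ell p^2)$ for all $\ell$, using $\mu^\ell\le p^\ell$ and $\operatorname{tr}M^\ell=\cE_{\text{cycle}}[2^Z]$, which equals $1+O(\ell p^2)$ by direct expansion when $\ell p^2$ is small. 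This gives the product $\exp(O(p^2D))$.

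\emph{Main obstacle.} I expect the main difficulty to be the passage from the binomial to the uniform model when $r^2/D$ is small, where a constant-factor loss is not allowed. The target there is $1+O(r^2/D)$. I would handle this by using the per-cycle bound $1+O(\ell p^2)$ via factorial moments: $\cE\binom{Z}{j}$ under uniform sampling is at most the binomial-model value times $\bigl(r/(D-r)\bigr)$-type ratios, since the probability that a fixed $t$-set lies in $B$ is $(r)_t/(D)_t\le p^t$. Since $\cE[2^Z]=\sum_j\cE\binom Zj$ and every term is a sum of inclusion probabilities of fixed sets, the uniform model is dominated termwise by the binomial model with $p=r/D$. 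This negative-correlation domination avoids Step 1 entirely and yields the lemma.
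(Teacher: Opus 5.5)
Your final argument (the termwise domination in your last paragraph, followed by Step~2) is correct. Its first half is the same as the paper's, but it evaluates the resulting sum by a different method.

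\textbf{Shared step.} Both proofs expand the exponential into a sum of indicators $\mathbf 1_{\{T\subseteq B\}}$ and use $\cP(T\subseteq B)=(r)_{|T|}/(D)_{|T|}\le p^{|T|}$ term by term.

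\textbf{Paper's evaluation.} The paper first passes to the undirected graph $L$ with edges $\{x,\rho(x)\}$ and accepts the loss $|B\cap\rho(B)|\le 2e_L(B)$. It then bounds $\sum_{A\subseteq E(L)}3^{|A|}p^{v(A)}$ by hand, cutting $A$ into blocks on the path and cycle components of $L$. This needs $3p$ bounded away from $1$ (the paper takes $p\le 1/6$), so the range $p>1/6$ is treated separately by the crude bound $4^{r}$.

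\textbf{Your evaluation.} You expand $2^{Z}=\sum_{S\subseteq\Omega}\mathbf 1_{\{S\cup\rho^{-1}(S)\subseteq B\}}$ directly. The dominating sum is then exactly $\cE[2^{Z}]$ in the independent model with $p=r/D$, and this factors over the cycles of $\rho$ as a product of traces $\operatorname{tr}(M^{\ell})$. This gains three things:
\begin{itemize}
\item a single treatment of all $p\le 1/2$;
\item automatic handling of $2$-cycles, since $\operatorname{tr}M^{2}=1+3p^{2}$;
\item an explicit constant.
\end{itemize}
The cost is a small spectral computation, which the paper's purely combinatorial count avoids.

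\textbf{Two clean-ups.}

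First, Step~1 (Poissonization) is superfluous and should be deleted. The termwise domination already gives $\cE_{\mathrm{unif}}[2^{Z}]\le\cE_{\mathrm{Bin}}[2^{Z}]$ with no constant-factor loss, so the small-$r^2/D$ worry disappears.

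Second, the claim $\operatorname{tr}M^{\ell}\le 1+O(\ell p^{2})$ ``for all $\ell$'' is false when $\ell p^{2}$ is large. You only need $\operatorname{tr}M^{\ell}\le e^{O(\ell p^{2})}$, which follows directly:
\begin{itemize}
\item The characteristic polynomial of $M$ equals $-p^{2}$ at $1$, so $\lambda\ge 1$.
\item Hence $\mu=p(1-p)/\lambda\le p$, so $\mu^{\ell}\le p^{\ell}\le p^{2}$ for $\ell\ge2$.
\item Also $\lambda\le 1+2p^{2}$ for all $p\ge 0$, because $\sqrt{1-2p+5p^{2}}\le 1-p+4p^{2}$.
\end{itemize}
Therefore $\operatorname{tr}M^{\ell}\le(1+2p^{2})^{\ell}+p^{2}\le e^{3\ell p^{2}}$ for every $\ell\ge 2$. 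Multiplying over the cycles of $\rho$ gives the lemma with $C=3$.
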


\begin{proof}
Let $L$ be the graph on vertex set $\Omega$ obtained by joining each
$x\in\Omega$ to $\rho(x)$ and then forgetting orientations and multiple edges.
Since $\rho$ has no fixed points, the graph $L$ has no loops.  Moreover,
\(\Delta(L)\le2\), because each vertex is adjacent only to its predecessor and
successor in its \(\rho\)-cycle.

Let  \(e_L(B)\) denote the number of edges of \(L\) with both endpoints in
\(B\). Then $|B\cap\rho(B)|\le 2e_L(B)$. Hence, it is enough to prove that
\[
        \cE\bigl[4^{e_L(B)}\bigr]
        \le \exp\left(O\left(\frac{r^2}{D}\right)\right).
\]
Write \(p=r/D\).  For every fixed subset \(S\subseteq\Omega\), we have
\[
        \cP(S\subseteq B)
        =
        \frac{(r)_{|S|}}{(D)_{|S|}}
        \le p^{|S|}.
\]
Expanding
\[
        4^{e_L(B)}
        =
        \prod_{uv\in E(L)}
        \left(1+3\cdot\mathbf 1_{\{u,v\}\subseteq B}\right),
\]
we obtain
\[
\begin{aligned}
        \cE\bigl[4^{e_L(B)}\bigr]
        &=
        \sum_{A\subseteq E(L)}
        3^{|A|}
        \cP\bigl(V(A)\subseteq B\bigr)  
        &\le
        \sum_{A\subseteq E(L)}
        3^{|A|}p^{v(A)},
\end{aligned}
\]
where \(V(A)\) is the set of vertices incident with the edges of \(A\), and
\(v(A)=|V(A)|\).  Put
\[
        S_L:=\sum_{A\subseteq E(L)}3^{|A|}p^{v(A)}.
\]
It remains to prove that \(S_L\le \exp(O(Dp^2))\).

We now bound the last sum by considering different ranges of p.

\paragraph{Case 1: $p\le 1/6$.}

Since \(L\) has maximum degree at most two, its components are paths and
cycles. Moreover, the sum
\[
        \sum_{A\subseteq E(L)}3^{|A|}p^{v(A)}
\]
factors over the connected components of \(L\), so it suffices to consider each  component separately.

If \(J\) is a path with \(t\) vertices, then for any non-empty \(A\subseteq E(J)\),
write \(A\) as a union of maximal consecutive blocks of selected edges.  A
block of length \(\ell\) has weight \(3^\ell p^{\ell+1}\).  The total weight
of all possible blocks in \(J\) is at most
\[
        t\sum_{\ell\ge1}3^\ell p^{\ell+1}
        =
        O(tp^2),
\]
because \(p\le 1/6\).  Dropping the requirement that the chosen blocks be
disjoint can only increase the sum, and hence
\[
        \sum_{A\subseteq E(J)}3^{|A|}p^{v(A)}
        \le
        \sum_{s\ge0}\frac{1}{s!}
        \left(
        O(tp^2)
        \right)^s
        =
        \exp(O(tp^2)).
\]

The same argument applies when \(J\) is a cycle, except for the case where all
edges of \(J\) are selected.  If \(J\) has \(t\) vertices, that exceptional
choice contributes \((3p)^t\), which is \(O(tp^2)\) for \(t\ge3\) and
\(p\le1/6\).  Thus the same bound
\[
        \sum_{A\subseteq E(J)}3^{|A|}p^{v(A)}
        \le \exp(O(tp^2))
\]
holds for cycles as well. Multiplying over all components \(J\) of \(L\), and using
\(\sum_J |V(J)|=D\), we get
\[
        S_L
        \le
        \prod_J \exp(O(|V(J)|p^2))
        =
        \exp(O(Dp^2)).
\]

\paragraph{Case 2: $p> 1/6$.}

  Since $e_L(B)\le r$, we have
\[
        4^{e_L(B)}\le 4^r=\exp(O(r))=\exp(O(Dp)).
\]
As $p>1/6$, we have $Dp=O(Dp^2)$.  Thus in this case as well,
\[
        \cE\bigl[4^{e_L(B)}\bigr]
        \le
        \exp(O(Dp^2)).
\]

Combining the two cases and recalling that $Dp^2=r^2/D$, we obtain
\[
        \cE\bigl[2^{|B\cap \rho(B)|}\bigr]
        \le
        \cE\bigl[4^{e_L(B)}\bigr]
        \le
        \exp\left(C\frac{r^2}{D}\right)
\]
for some absolute constant $C$.  This completes the proof.
\end{proof}

We are now prepared to prove the main proposition.

\begin{proof}[Proof of Proposition \ref{prop:overlap}]
Choose $F$ uniformly among all $m$-subsets of $\cT_n$.  We prove that the expected value of the displayed average is at most $1+o(1)$.

Fix $\tau\ne\id$.  Let
\[
        \Omega_\tau=\{e\in\cT_n:\tau(e)\ne e\},
        \qquad
        D=D(\tau)=|\Omega_\tau|,
        \qquad
        q=\frac{D}{N}.
\]
Thus $\Omega_\tau$ is the set of $k$-sets moved by $\tau$.

\begin{claim}\label{claim:single-permutation}
For any $\tau\ne\id$,
\[
        \cE_F\bigl[2^{|F\cap\tau(F)|}\bigr]
        \le
        (1+o(1))(2-q)^m .
\]
\end{claim}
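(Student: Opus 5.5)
Our plan is to condition on $R=|F\cap\Omega_\tau|$, the number of edges of $F$ moved by $\tau$, and then combine Lemma~\ref{lem:hypergeom} with Lemma~\ref{lem:cycle-sampling}.

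First, split $F$ into its fixed part $F\setminus\Omega_\tau$ and its moved part $B=F\cap\Omega_\tau$. Since $\tau$ maps $\Omega_\tau$ to itself bijectively and fixes every edge outside $\Omega_\tau$, we have
\[
|F\cap\tau(F)| = (m-R)+|B\cap\tau(B)|.
\]
Conditioned on $R=r$, the set $B$ is a uniform $r$-subset of $\Omega_\tau$, and $\tau$ restricted to $\Omega_\tau$ is fixed-point-free. If $r\le D/2$, Lemma~\ref{lem:cycle-sampling} gives
\[
\cE\bigl[2^{|F\cap\tau(F)|}\mid R=r\bigr]\le 2^{m-r}\exp(Cr^2/D).
\]

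Second, take expectation over $R$. This is hypergeometric with $N$ total elements, $D$ marked, and $m$ draws. The main term is $2^m\,\cE[(1/2)^R e^{CR^2/D}]$. Without the factor $e^{CR^2/D}$, Lemma~\ref{lem:hypergeom} with $a=1/2$ gives exactly $2^m(1-q/2)^m=(2-q)^m$.

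Third, we must show the correction $e^{CR^2/D}$ costs only a factor $1+o(1)$. Here we use the scale gap. For $\tau\ne\id$ and $k\ge3$ we have $D=\Omega_k(n^{k-1})$, while $m=O(n\log n)$. The typical value of $R$ is about $mq=mD/N$, so $R^2/D\approx m^2D/N^2\le m^2/N=o(1)$. To control the tail, we bound $e^{CR^2/D}\le e^{CmR/D}$ using $R\le m$, and absorb this into the $a$-parameter: $\cE[(e^{Cm/D}/2)^R]\le(1-q+qe^{Cm/D}/2)^m$. This requires $a=e^{Cm/D}/2\le1$, which holds since $m/D=o(1)$. We then compare with $(1-q/2)^m$. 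The ratio is
\[
\Bigl(1+\frac{q(e^{Cm/D}-1)/2}{1-q/2}\Bigr)^m\le\exp\bigl(O(mq\cdot m/D)\bigr)=\exp\bigl(O(m^2/N)\bigr)=1+o(1),
\]
using $1-q/2\ge1/2$ and $q/D=1/N$. This uniform bound across all $\tau$ is the main point, and it is where $k\ge3$ is used: $m^2/N=O(n^2\log^2 n/n^k)\to0$.

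Finally, we handle the range $r>D/2$, where Lemma~\ref{lem:cycle-sampling} does not apply. There we use the trivial bound $|B\cap\tau(B)|\le r$, which gives a contribution of at most $2^m\,\cP(R>D/2)$. Since $\cE R=mD/N$ and $D/2\gg mD/N$, a Chernoff bound for the hypergeometric distribution (or Lemma~\ref{lem:hypergeom} with large $a$ via Markov) gives $\cP(R>D/2)\le (2emD/(ND))^{D/2}=(O(m/N))^{D/2}$. Since $D=\Omega(n^{k-1})\gg m$, this is far smaller than $(2-q)^m 2^{-m}\ge 2^{-m}$, so it is negligible relative to $(2-q)^m$. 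Combining the two ranges proves the claim.
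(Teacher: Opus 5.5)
Your proposal is correct and follows the paper's proof essentially step for step: you condition on $R=|F\cap\Omega_\tau|$, apply Lemma~\ref{lem:cycle-sampling}, bound $R^2/D\le mR/D$ so the correction is absorbed into $a=e^{Cm/D}/2\le 1$ for Lemma~\ref{lem:hypergeom}, and compare with $(2-q)^m$ via the factor $\exp(O(m^2/N))=1+o(1)$. The only difference is your final paragraph on the range $r>D/2$, which is unnecessary (and its appeal to Lemma~\ref{lem:hypergeom} with $a>1$ lies outside that lemma's stated range): since $R\le m$ and $m=O(n\log n)\le D/2$ for large $n$, that range is empty, and this is exactly how the paper avoids it.
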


\begin{poc}
We first note that \(D=\Omega_k(n^{k-1})\) uniformly over all \(\tau\ne\id\). Indeed, choose a vertex $x$ with $\tau(x)\ne x$,
and put $y=\tau(x)$.  Every $k$-set containing $x$ but not $y$ is moved by
$\tau$, giving at least $\binom{n-2}{k-1}$ moved $k$-sets.

Let $B=F\cap\Omega_\tau, R=|B|.$
Then $R$ has the hypergeometric distribution obtained by choosing $m$ elements
from an $N$-element set with $D$ marked elements.  Conditional on $R=r$, the set
$B$ is a uniformly random $r$-element subset of $\Omega_\tau$.  Moreover, the
restriction of $\tau$ to $\Omega_\tau$ has no fixed point.

Since $\Omega_\tau$ is $\tau$-invariant and every $k$-set outside
$\Omega_\tau$ is fixed setwise by $\tau$, we have
\[
        |F\cap\tau(F)|
        =
        m-R+|B\cap\tau(B)|.
\]

Since $m=O(n\log n)$ and $D=\Omega_k(n^{k-1})$, we have $m\le D/2$ for all
large $n$.  By Lemma~\ref{lem:cycle-sampling},
\[
        \cE\left[
        2^{|F\cap\tau(F)|}\mid R=r
        \right]
        \le
        2^{m-r}\exp\left(C\frac{r^2}{D}\right).
\]
Taking expectation over $R$, we get
\[
        \cE_F\bigl[2^{|F\cap\tau(F)|}\bigr]
        \le
        \cE_R\left[
        2^{m-R}\exp\left(C\frac{R^2}{D}\right)
        \right].
\]

Using $R\le m$, we have $R^2/D\le mR/D$.  Hence
\[
        2^{m-R}\exp\left(C\frac{R^2}{D}\right)
        \le
        2^m\left(2^{-1}e^{Cm/D}\right)^R.
\]
Set $a_\tau=2^{-1}e^{Cm/D}$. Then $a_\tau\le1$ for all
large $n$.  Applying Lemma~\ref{lem:hypergeom} to $R$, we have
\[
\begin{aligned}
        \cE_F\bigl[2^{|F\cap\tau(F)|}\bigr]
        \le
        2^m\,\cE_R \bigl[a_\tau^R\bigr]  
        \le
        2^m\left(1-q+q a_\tau\right)^m  
        =
        2^m\left(1-q+\frac{q}{2}e^{Cm/D}\right)^m .
\end{aligned}
\]

Let $\delta_\tau= q\left(e^{Cm/D}-1\right).$ Since $e^{Cm/D}=1+O(m/D)$ and $q=D/N$, we have $\delta_\tau= O\left(\frac{m}{N}\right).$ Since $m=O(n\log n)$ and $N=\binom nk=\Theta_k(n^k)$ with $k\ge3$,  we have $m^2/N=o(1)$. Hence

\[\begin{aligned}
       \left(2-2q+qe^{Cm/D}\right)^m
 &=
         (2-q)^m
        \left(1+\frac{\delta_\tau}{2-q}\right)^m  \\
        &\le
        (2-q)^m \exp(m\delta_\tau)  \\
        &=
        (2-q)^m \exp\left(O\left(\frac{m^2}{N}\right)\right)\\
        &= (1+o(1))(2-q)^m.
\end{aligned}\]

This completes the proof.
\end{poc}

By linearity of expectation and Claim~\ref{claim:single-permutation},
\[
        \cE_F\left[
        \frac1{n!}\sum_{\tau\in S_n,\ \tau\ne\id}
        2^{|F\cap\tau(F)|}
        \right]
        \le
        (1+o(1))
        \frac1{n!}\sum_{\tau\in S_n,\ \tau\ne\id}
        (2-q(\tau))^m .
\]
It remains to show that this last average is at most $1+o(1)$.

We shall use the following simple observation.  If a setwise fixed \(k\)-set
contains a moved vertex \(x\), then it contains the whole non-trivial cycle of
\(\tau\) containing \(x\).  Hence its intersection with the moved vertices is a
union of non-trivial cycles, and is either empty or has size at least two.

Let \(s\) be the number of moved vertices of \(\tau\), and let \(f=n-s\). The number of \(k\)-sets contained entirely in the fixed-vertex set
is exactly \(\binom fk\), which is at most $\left(\frac fn\right)^k\binom n k$. If a setwise fixed $k$-set contains exactly $\ell$ moved
vertices where $2\le \ell\le k$, then these $\ell$ vertices must be a union of
non-trivial cycles.  The number of ways to choose these cycles is at most
\(O_k(s^{\lfloor \ell/2\rfloor})\), and the remaining $k-\ell$ vertices can be chosen in at most
$O_k(f^{k-\ell})$ ways. Dividing by $\binom nk=\Theta_k(n^k)$ and summing over $2\le \ell\le k$, we get
\[
        1-q
        \le
\left(\frac fn\right)^k
        +
        O_k\left(
        \sum_{\ell=2}^k
        \frac{s^{\lfloor \ell/2\rfloor}f^{k-\ell}}{n^k}
        \right).
\]

We now bound the last average by splitting into two ranges according to \(s\).

\paragraph{Case 1:  $2\le s\le n/2$.}
 Since $s,f\le n$, $O_k(s^{\lfloor\ell/2\rfloor}f^{k-\ell})\le O_k(s n^{k-2}).$
 Consequently,
\[
        q\ge 
        1-\left(1-\frac sn\right)^k
        -O_k\left(\frac{s}{n^2}\right).
\]
   
There are at most $n^s$ permutations moving exactly $s$ vertices. Also, $ (2-q)^m
        =
        2^m(1-q/2)^m
        \le
        2^m\exp(-mq/2),$
and $2^m= O(n!)$.  Thus the contribution of this range to the average is at most
a constant times
\[
        \sum_{2\le s\le n/2}
        n^s
        \exp\left[
        -\frac m2
        \left(
        1-\left(1-\frac sn\right)^k
        -O_k\left(\frac{s}{n^2}\right)
        \right)
        \right].
\]

Since $k\ge 3$, there exists a constant $\eta=\eta(k)>0$ such that $ 1-(1-\alpha)^k\ge (2\log 2+\eta)\alpha$ for every $0<\alpha\le 1/2$.  
Applying this with $\alpha=s/n$, we get for any $2\le s\le n/2$,
\[
        1-\left(1-\frac sn\right)^k
        \ge
        (2\log 2+\eta)\frac{s}{n}.
\]
Moreover,
\(
        O_k\left(\frac{s}{n^2}\right)
        =
        o\left(\frac{s}{n}\right)
\).  Hence, for all sufficiently large $n$,
\[
        1-\left(1-\frac sn\right)^k
        -
        O_k\left(\frac{s}{n^2}\right)
        \ge
        \left(2\log 2+\frac{\eta}{2}\right)\frac{s}{n}.
\]

Since 
\(
        m=\left(1+o(1)\right)\frac{n\log n}{\log 2},
\)
there exists a constant $\gamma=\gamma(k)>0$ such that the $s$th summand is at
most
\[
\begin{aligned}
        n^s
        \exp\left[
        -\frac m2
        \left(2\log 2+\frac{\eta}{2}\right)\frac{s}{n}
        \right]
        \le
        n^s\exp\left[-(1+\gamma)s\log n\right]  
        =
        n^{-\gamma s}.
\end{aligned}
\]
Therefore
\[
        \sum_{2\le s\le n/2}
        n^s
        \exp\left[
        -\frac m2
        \left(
        1-\left(1-\frac sn\right)^k
        -O_k\left(\frac{s}{n^2}\right)
        \right)
        \right]
        \le
        \sum_{s\ge 2} n^{-\gamma s}
        =
        o(1).
\]
This proves that the contribution from the range $2\le s\le n/2$ is $o(1)$.

\paragraph{Case 2:  $ s>n/2$.}
Let $p_{n,f}$ be the proportion of permutations of $[n]$ with exactly $f$
fixed points.  Since $s\le n$, the contribution of the range $f<n/2$ is therefore at most
\[
        (1+o(1))\sum_{0\le f<n/2}
        p_{n,f}
        \left(
        1+
        \left(\frac fn\right)^k
        +
        O_k\left(
        \sum_{\ell=2}^k
        \frac{n^{\lfloor \ell/2\rfloor}f^{k-\ell}}{n^k}
        \right)
        \right)^m.
\]
For each fixed $f$, we have $p_{n,f}\to e^{-1}/f!$. Choose \(C_k>0\) large enough and set
\[
        a_{n,f}:=
        \left(\frac fn\right)^k
        +
        C_k
        \sum_{\ell=2}^k
        \frac{n^{\lfloor \ell/2\rfloor}f^{k-\ell}}{n^k}.
\]

Then it suffices to show that
\[
\sum_{f=0}^{n/2}p_{n,f}(1+a_{n,f})^m\le 1+o(1).
\]

 We next split the expression above into three terms and estimate them separately. 
 We first choose \(\varepsilon>0\) sufficiently small, and then choose \(M\) sufficiently large but fixed independently of \(n\).

\paragraph{The range $f\le M$.} Since $k\ge 3$, $n^{\lfloor k/2\rfloor}\le n^{k-2}$, hence

\[
        a_{n,f}\le O\left(\frac{1}{n^3}\right)+
        O_k\left(
        \sum_{\ell=2}^k
        \frac{n^{\lfloor k/2\rfloor}}{n^k}
        \right)\le  O\left(\frac{1}{n^2}\right).
\]

Since $m=O(n\log n)$ and $\log(1+x)\le x$ for $x\ge0$, we have $ m\log(1+a_{n,f})\le m a_{n,f}=o(1)$, and hence 
\[
(1+a_{n,f})^m =\exp\bigl(m\log(1+a_{n,f})\bigr)\le \exp\bigl(ma_{n,f}\bigr)=1+o(1).
\] 
Consequently, 
\[
      \sum_{f=0}^M p_{n,f}(1+a_{n,f})^m
        =
        e^{-1}\sum_{f=0}^M \frac1{f!}+o(1)
        \le 1+o(1).
\]

\paragraph{The range $M<f\le \varepsilon n$.}
We first estimate the two terms of $m a_{n,f}$.  The contribution of
the term $m(f/n)^k$ is
\[
        m\left(\frac fn\right)^k
        =
        O_k\left(
        f\log f\cdot
        \left(\frac fn\right)^{k-1}
        \frac{\log n}{\log f}
        \right)
        =
        \bigl(O_k(\varepsilon^{k-1})+o(1)\bigr)f\log f.
\]
The last equality holds for $M<f\le \varepsilon n$: if
$f\le\sqrt n$, then the displayed ratio is $o(1)$, while if $f>\sqrt n$, then
$\log n/\log f\le2$ and $(f/n)^{k-1}\le\varepsilon^{k-1}$.

It remains to control the error terms in $ma_{n,f}$. For each $2\le \ell\le k$,
\[
        m\frac{n^{\lfloor \ell/2\rfloor}f^{k-\ell}}{n^k}
        =
        O_k\left(
        f\log f\cdot
        n^{\lfloor \ell/2\rfloor-k+1}
        f^{k-\ell-1}
        \frac{\log n}{\log f}
        \right)
        =
        o(f\log f)
\]
for $M<f\le \varepsilon n$.  Hence
\[
        m a_{n,f}
        \le
        \bigl(O_k(\varepsilon^{k-1})+o(1)\bigr)f\log f.
\]
Choosing $\varepsilon>0$ sufficiently small, we may assume that $m a_{n,f}\le \frac12 f\log f$ for all sufficiently large $n$ throughout this range. Therefore, using $p_{n,f}\le 1/f!$ and Stirling's formula,
\[
        p_{n,f}(1+a_{n,f})^m
        \le
        \exp\left(-(1+o(1))f\log f+\frac12 f\log f\right)
        \le
        \exp\left(-\frac13 f\log f\right)
\]
for all sufficiently large $f$.  
Consequently,
\[
\limsup_{n\to\infty}
\sum_{M<f\le\varepsilon n}
p_{n,f}(1+a_{n,f})^m
\le
\sum_{f>M}
\exp\left(-\frac13 f\log f\right).
\]
The right-hand side tends to $0$ as $M\to\infty$.  Thus, after choosing $M$
sufficiently large, the contribution from the range $M<f\le\varepsilon n$ is
$o(1)$.

\paragraph{The range \(\varepsilon n\le f<n/2\).}
Write \(f=\beta n\), where \(\varepsilon\le\beta<1/2\).  Then
\[
        a_{n,f}
        =
        \beta^k
        +
        O_k\left(
        \sum_{\ell=2}^k
        \beta^{k-\ell}n^{\lfloor \ell/2\rfloor-\ell}
        \right)
        =
        \beta^k+o(1).
\]
Using \(p_{n,f}\le1/f!\) and Stirling's formula,
\[
        \log p_{n,f}
        \le
        -f\log f+O(f)
        =
        -\beta n\log n+O(n).
\]
On the other hand,
\[
        m\log(1+a_{n,f})
        =
        \left(\log_2(1+\beta^k)+o(1)\right)n\log n.
\]
Since \(k\ge3\) and \(0\le\beta\le1/2\), we have $ \log_2(1+\beta^k)\le \beta^2$, therefore
\[
        \log\left(p_{n,f}(1+a_{n,f})^m\right)
        \le
        \left(-\beta+\beta^2+o(1)\right)n\log n.
\]
As \(\varepsilon\le\beta<1/2\), the coefficient \(-\beta+\beta^2\) is at most
\(-\varepsilon/2\).  Hence, for all sufficiently large \(n\),
\[
        p_{n,f}(1+a_{n,f})^m
        \le
        \exp\left(-\frac{\varepsilon}{3}n\log n\right).
\]
Thus
\[
        \sum_{\varepsilon n\le f<n/2}
        p_{n,f}(1+a_{n,f})^m
        \le
        n\exp\left(-\frac{\varepsilon}{3}n\log n\right)
        =
        o(1).
\]

Combining the three ranges, and then choosing \(M\) sufficiently large,
gives
\[
        \sum_{0\le f<n/2}p_{n,f}(1+a_{n,f})^m\le 1+o(1).
\]
Hence the contribution from Case 2 is at most \(1+o(1)\).
Together with Case 1, this proves that the expected average is at most
\(1+o(1)\), and therefore there exists a choice of \(F\) satisfying
Proposition~\ref{prop:overlap}.

\end{proof}

\section{Concluding remarks}
The proof isolates the difference in scale that distinguishes the hypergraph
setting from the graph case.  In fixed uniformity \(k\), a transposition moves $2\binom{n-2}{k-1}=\Theta(n^{k-1})$ \(k\)-edges, whereas the sparse pattern used in the construction has only
\(\Theta(n\log n)\) edges.  Thus the separation needed for the overlap estimate
is available for \(k\ge3\), but disappears when \(k=2\).

We have not attempted to optimize the constant.  The value $2/9$ comes from the
second-moment argument, together with the rounding in the choice of the number
of edges in the sparse pattern.  Along any subsequence for which one can choose
$m$ with $n!2^{-m}=1/2+o(1)$, the same argument would give the value $1/4$,
which is the best possible constant obtainable from this particular
second-moment lower bound.

It is natural to expect that the relevant copy-counting random variables should
exhibit Poisson-type behaviour.  Under such a heuristic, the probability of
having a unique copy is maximized at $1/e$, and this suggests that the optimal
constant may be $1/e$.  At present, however, we do not even know whether the limiting behaviour of $f_k(n)$ depends on $k$, and resolving this would require new ideas
beyond the single-pattern second-moment argument used here.

\section*{Acknowledgements}
The authors are grateful to Daniel Kr\'al' for reading an earlier version of the
paper and for several helpful comments.  The authors also acknowledge the use of
ChatGPT (GPT-5.5 Pro) in the early stage of this project.  In particular, the
main idea behind the construction in Proposition~\ref{prop:overlap} for the
case $k=3$ was first suggested during our interaction with ChatGPT.  The authors
subsequently verified the argument, refined the construction, and extended it
to the general $k$-uniform setting.  The authors assume full responsibility for
all statements and proofs presented in the paper.
\bibliographystyle{abbrv}
\bibliography{reference}

\end{document}